\newtheorem{theorem}{Theorem}[section]
\newtheorem{proposition}{Proposition}[section]
\newtheorem{corollary}{Corollary}[section]
\newtheorem{definition}{Definition}[section]
\newtheorem{remark}{Remark}[section]
\newcommand{\ie}{i.\hspace{.5pt}e.\ }
\newcommand{\f}{\phi}
\newcommand{\g}{\tilde{g}}
\newcommand{\n}{\nabla}
\newcommand{\M}{(\mathcal{M},\A\f,\A\xi,\A\eta,\A{}g)}
\newcommand{\R}{\mathbb R}
\newcommand{\X}{\mathfrak X}
\newcommand{\LL}{\mathcal{L}}
\newcommand{\lm}{\lambda}
\newcommand{\al}{\alpha}
\newcommand{\bt}{\beta}
\newcommand{\A}{\allowbreak{}}
\newcommand{\D}{\mathrm{d}\hspace{-0.5pt}}
\newcommand{\thmref}[1]{Theorem~\ref{#1}}
\newcommand{\propref}[1]{Proposition~\ref{#1}}
\newcommand{\sectref}[1]{Section~\ref{#1}}
\DeclareMathOperator{\tr}{tr} 
\DeclareMathOperator{\Span}{span} 
\begin{document}

\title[Para-Ricci-like Solitons with Vertical Potential on...]
{Para-Ricci-like Solitons with Vertical Potential on Para-Sasaki-Like Riemannian $\Pi$-Manifolds}


\author[H. Manev]{Hristo Manev}

\address[H. Manev]{Medical University of Plovdiv, Faculty of Pharmacy,
Department of Medical Physics and Biophysics, 15A Vasil Aprilov Blvd.,
Plovdiv 4002, Bulgaria}
\email{hristo.manev@mu-plovdiv.bg}

\begin{abstract}
Object of study are para-Ricci-like solitons on para-Sasaki-like almost paracontact almost paracomplex Riemannian manifolds, briefly, Riemannian $\Pi$-manifolds. Different cases when the potential of the soliton is the Reeb vector field or pointwise collinear to it are considered. Some additional geometric properties of the constructed objects are proven. Results for a parallel symmetric second-order covariant tensor on the considered manifolds are obtained. Explicit example of dimension 5 in support of the given assertions is provided.\thanks{H.M. was partially supported by Project MU21-FMI-008 of the Scientific Research Fund, University of Plovdiv, Bulgaria}
\end{abstract}

\subjclass[2010]{53C25; 53D15; 53C50; 53C44; 53D35; 70G45}

\keywords{para-Ricci-like soliton, para-Sasaki-like, Riemannian $\Pi$-Manifolds, vertical potential, Einstein manifold, Ricci symmetric manifold, parallel symmetric tensor}


\maketitle



\section{Introduction}\label{sect-1}

In 1982 R.\,S. Hamilton introduced the concept of Ricci solitons as a special solution of the Ricci flow equation (\cite{Ham82}). In \cite{Cao}, the author made a detailed study on Riemannian Ricci solitons.
The start of the study of Ricci solitons in contact Riemannian geometry is given with \cite{Shar}.
Following this work the investigation of the Ricci solitons in different types of almost contact metric manifolds are done in \cite{GalCra,IngBag,NagPre}.

Different generalizations of this concept are studied: in paracontact geometry \cite{Bla15,PraHad}; in pseudo-Riemannian geometry \cite{BagIng12,BlaPer,Bro-etal,MM-Sol1,MM-Sol2,HM17}.

We investigate the noted concept of Ricci solitons in the geometry of almost paracontact almost paracomplex Riemannian manifolds, briefly, Riemannian $\Pi$-Manifolds. The induced almost product structure on the paracontact distribution of these manifolds is traceless and the restriction on the paracontact distribution of the almost paracontact structure is an almost paracomplex structure. The study of the considered manifolds starts in \cite{ManSta}, where they are called almost paracontact Riemannian manifolds of type $(n,n)$. Their investigation continues in \cite{ManTav57,ManTav2,ManTav3}, under the name almost paracontact almost paracomplex Riemannian manifolds.

In the present paper, we continue the investigation of the introduced in \cite{HM17} generalization of the Ricci soliton called para-Ricci-like soliton. Here, the potential of the considered para-Ricci-like soliton is a vector field, which is pointwise collinear to the Reeb vector field. The paper is organized as follows. After the present introductory \sectref{sect-1}, in \sectref{sect-2} we give some preliminary definitions and facts about para-Sasaki-like Riemannian $\Pi$-manifolds. In \sectref{sect-3} we investigate para-Ricci-like solitons on the considered manifolds and we prove some additional geometric properties. \sectref{sect-4} is devoted to some characterization for para-Ricci-like solitons on para-Sasaki-like Riemannian $\Pi$-manifolds
concerning a parallel symmetric $(0,2)$-tensor. In the final \sectref{sect-5} we comment an explicit example in {support} of some of the {proven} assertions.

\section{Para-Sasaki-like Riemannian $\Pi$-Manifolds}\label{sect-2}

We denote by $\M$ a \emph{Riemannian $\Pi$-manifold}, where $\mathcal{M}$ is a differentiable $(2n+1)$-dimensional manifold, $g$ is a Rie\-mannian metric and $(\f,\xi,\eta)$ is an almost paracontact structure, \ie $\f$ is a (1,1)-tensor field, $\xi$ is a Reeb vector field and $\eta$ is its dual 1-form. The following conditions are valid:
\begin{equation}\label{strM}
\begin{array}{c}
\f\xi = 0,\qquad \f^2 = I - \eta \otimes \xi,\qquad
\eta\circ\f=0,\qquad \eta(\xi)=1,\\ \
\tr \f=0,\qquad g(\f x, \f y) = g(x,y) - \eta(x)\eta(y),
\end{array}
\end{equation}
where $I$ is the identity transformation on $T\mathcal{M}$ (\cite{Sato76,ManTav57}). Consequently, from the latter equalities we obtain the following:
\begin{equation}\label{strM2}
\begin{array}{ll}
g(\f x, y) = g(x,\f y),\qquad &g(x, \xi) = \eta(x),
\\
g(\xi, \xi) = 1,\qquad &\eta(\n_x \xi) = 0,
\end{array}
\end{equation}
where {$\n$ denotes} the Levi--Civita connection of $g$.
Here and further, by $x$, $y$, $z$, $w$ we denote arbitrary vector fields from $\X(\mathcal{M})$ or vectors in $T\mathcal{M}$ at a fixed point of $\mathcal{M}$.

The associated metric $\g$ of $g$ on $\M$ is determined by {the equality}
\begin{equation}\label{gg}
\g(x,y)=g(x,\f y)+\eta(x)\eta(y).
\end{equation}
Obviously, $\g$ is compatible with $\M$ {in the same way} as $g$ and it is indefinite metric of signature $(n + 1, n)$.

%
%
%
%


In \cite{IvMaMa2}, it is introduced and studied {the class} of \emph{para-Sasaki-like spa\-ces} in the set of {Riemannian $\Pi$-manifolds } which are {obtained} from a {specific} cone construction. This special subclass of the considered manifolds is determined by the following condition:
\begin{equation}\label{defSl}
\begin{array}{l}
\left(\nabla_x\f\right)y=-g(x,y)\xi-\eta(y)x+2\eta(x)\eta(y)\xi,\\
\phantom{\left(\nabla_x\f\right)y}=-g(\f x,\f y)\xi-\eta(y)\f^2 x.
\end{array}
\end{equation}


In \cite{IvMaMa2} is proven that the following identities are valid for any para-Sasaki-like Riemannian $\Pi$-manifold:
\begin{equation}\label{curSl}
\begin{array}{ll}
\n_x \xi=\f x, \qquad &\left(\n_x \eta \right)(y)=g(x,\f y),\\
R(x,y)\xi=-\eta(y)x+\eta(x)y, \qquad &R(\xi,y)\xi=\f^2y, \\
\rho(x,\xi)=-2n\, \eta(x),\qquad 				&\rho(\xi,\xi)=-2n,
\end{array}
\end{equation}
where {$R$ and $\rho$ stand} for the curvature tensor and the Ricci tensor, respectively.

It is known from \cite{HM17} that a Riemannian $\Pi$-manifold  $\M$ is said to be
\emph{para-Ein\-stein-like} with constants $(a,b,c)$ if its Ricci tensor $\rho$ satisfies:
\begin{equation}\label{defEl}
\begin{array}{l}
\rho=a\,g +b\,\g +c\,\eta\otimes\eta.
\end{array}
\end{equation}
Moreover, if $b=0$ or $b=c=0$ the manifold is called an \emph{$\eta$-Einstein manifold} or an \emph{Einstein manifold}, respectively. If $a$, $b$, $c$ are functions on $\mathcal{M}$, then the manifold is called \emph{almost para-Einstein-like}, \emph{almost $\eta$-Einstein manifold} or an \emph{almost Einstein manifold}, respectively.

Let is consider a $(2n+1)$-dimensional Riemannian $\Pi$-manifold $\M$ which is para-Sasaki-like and para-Einstein-like with constants $(a,b,c)$. Tracing \eqref{defEl} and using the last equalities of \eqref{curSl}, we have: \cite{HM17}
\begin{equation}\label{tauElSl2}
a+b+c=-2n,\qquad \tau=2n(a-1),
\end{equation}
where $\tau$ stands for the scalar curvature with respect to $g$ of $\M$.
Moreover, for the scalar curvature $\tilde\tau$ with respect to $\g$ on $\M$ we obtain
\begin{equation}\label{abctau*-ElSl}
\tilde\tau=2n(b-1).
\end{equation}

Taking into account \eqref{tauElSl2} and \eqref{abctau*-ElSl}, expression \eqref{defEl} gets the following form
\begin{equation*}\label{defElSl}
\begin{array}{l}
\rho=\left(\dfrac{\tau}{2n}+1\right)g +\left(\dfrac{\tilde\tau}{2n}+1\right)\g
+\left(-2(n+1)-\dfrac{\tau+\tilde\tau}{2n}\right)\eta\otimes \eta.
\end{array}
\end{equation*}

\begin{proposition}\label{prop:El-Dtau}
Let $\M$ be a $(2n+1)$-dimensional para-Sasaki-like Riemannian $\Pi$-manifold. If $\M$ is almost para-Einstein-like  with functions $(a,b,c)$,
then the scalar curvatures $\tau$ and $\tilde\tau$ are constants
\begin{equation*}\label{El-Dtauxi}
\tau = const, \qquad \tilde\tau=-2n
\end{equation*}
and $\M$ is $\eta$-Einstein with constants
\[
(a,b,c)=\left(\frac{\tau}{2n}+1,\,0,\,-2n-1-\frac{\tau}{2n}\right).
\]
\end{proposition}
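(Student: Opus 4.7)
The proposition strengthens the almost para-Einstein-like condition \eqref{defEl} with functions $(a,b,c)$ to the $\eta$-Einstein condition with the exhibited constants. My plan is (i) to extend the three algebraic identities \eqref{tauElSl2}--\eqref{abctau*-ElSl} to pointwise identities when $(a,b,c)$ are functions, and (ii) to apply the contracted second Bianchi identity together with the para-Sasaki-like structure equations to force $b\equiv 0$ and $\tau$ constant.

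Step (i) is essentially a rerun of the constant-case derivation at each point. Plugging $y=z=\xi$ into \eqref{defEl} and using $\rho(\xi,\xi)=-2n$ from \eqref{curSl} yields $a+b+c=-2n$. Tracing \eqref{defEl} with $g$, using $\tr\f=0$ and $|\xi|=1$, gives $\tau=2n(a-1)$, and the analogous trace with $\g$ gives $\tilde\tau=2n(b-1)$. It therefore suffices to prove $b\equiv 0$ (which yields $\tilde\tau=-2n$) and that $a$ is constant.

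For step (ii), I would compute the $g$-divergence of \eqref{defEl}. Using $\n g=0$, the identities $\n_x\xi=\f x$ and $(\n_x\eta)(y)=g(x,\f y)$ from \eqref{curSl}, and $(\n_x\f)y$ from \eqref{defSl}, a direct calculation yields the auxiliary identity $g^{ij}(\n_{e_i}\g)(e_j,x)=-2n\,\eta(x)$ and produces
\[
(\Div\rho)(x)=x(a)+(\f x)(b)+\bigl[\xi(b)+\xi(c)-2nb\bigr]\eta(x).
\]
Combined with the contracted Bianchi identity $2\,\Div\rho=\D\tau$ and $\tau=2n(a-1)$, this rearranges to the pointwise constraint
\[
(n-1)\,x(a)=(\f x)(b)+\bigl[\xi(b)+\xi(c)-2nb\bigr]\eta(x).
\]
Setting $x=\xi$ and using $\xi(a)+\xi(b)+\xi(c)=0$ (from differentiating $a+b+c=-2n$) gives the vertical relation $\xi(a)=-2b$, while the horizontal specialization ($\eta(x)=0$) gives $(n-1)\,x(a)=(\f x)(b)$.

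The main obstacle, as I anticipate it, is closing the argument: the Bianchi identity alone couples $\n a$ to $\n b$ but does not directly force either to vanish---replacing $x$ by $\f x$ in the horizontal relation only reproduces the same equation, since $\f^2=I$ on the horizontal distribution. To break this symmetry I would derive a second independent identity, either by differentiating the tautology $\rho(\,\cdot\,,\xi)\equiv-2n\,\eta$ in independent directions and symmetrising, or by taking the $\g$-trace of $\n_z\rho$ and using $\tilde\tau=2n(b-1)$ together with the explicit form of $\n\g$. Together with the Bianchi constraint this should force $b\equiv 0$, hence $\tilde\tau=-2n$, and $\D a=0$, hence $\tau=2n(a-1)$ is constant. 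Substituting back then recovers $c=-2n-1-\tau/(2n)$, matching the claimed form.
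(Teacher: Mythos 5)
Your Step (i) reproduces the paper's starting point correctly, but Step (ii) contains a genuine and, by your own admission, unresolved gap at exactly the point where the proposition's content lies. The contracted Bianchi identity $2\Div\rho=\D\tau$ applied to the ansatz $\rho=a\,g+b\,\g+c\,\eta\otimes\eta$ yields one pointwise relation coupling $\D a$ and $\D b$, and you correctly observe that it cannot by itself force $b\equiv0$ and $\D a=0$. The ``second independent identity'' that is supposed to close the argument is never derived, and at least one of your two proposed sources for it provably yields nothing: covariantly differentiating the identity $\rho(\cdot,\xi)=-2n\,\eta$ and inserting the ansatz reduces, after using $(\n_y\f)\xi=-\f^2y$ and $(\n_y\eta)(x)=g(y,\f x)$, to $(a+b+c+2n)\,g(\f x,y)=0$, i.e.\ to the relation $a+b+c=-2n$ you already have. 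The other suggestion (a $\g$-trace of $\n_z\rho$) is left entirely unexecuted, so the conclusion ``this should force $b\equiv0$'' is an assertion, not a proof.

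The paper closes the argument by a different mechanism, which is the missing ingredient. It uses the Einstein-like ansatz to rewrite $R(x,\xi)\xi=-\f^2x$ in terms of the Ricci operator $Q$, $\f x$ and $\f^2x$, with coefficients involving $\tau$ and $\tilde\tau$; differentiating and tracing this expression gives
$g^{ij}g\bigl((\n_{e_i}R)(e_j,\xi)\xi,y\bigr)=-\tfrac{1}{4n}\D\tau(y)-\bigl(\tfrac{\tilde\tau}{2n}+1\bigr)\eta(y)$.
On the other hand, the contracted second Bianchi identity expresses the same trace as $\eta\bigl((\n_yQ)\xi\bigr)-\eta\bigl((\n_\xi Q)y\bigr)$, which vanishes because $Q\xi=-2n\,\xi$ and $\n_x\xi=\f x$ give $(\n_xQ)\xi=-Q\f x+2n\,\f x$. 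Equating the two yields $\D\tau(y)=-2(\tilde\tau+2n)\eta(y)$, from which $\tilde\tau=-2n$, $\D\tau=0$, hence $b=0$ and the stated constants follow via \eqref{tauElSl2} and \eqref{abctau*-ElSl}. Without an explicit substitute for this curvature computation your argument does not establish the proposition.
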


\begin{proof}
If $\M$ is almost para-Einstein-like then \eqref{tauElSl2} and \eqref{abctau*-ElSl} are valid, where $(a,b,c)$ are a triad of functions.

Using \eqref{curSl} and substituting $y=\xi$, we can express $R(x,\xi)\xi$ as follows
\begin{equation*}\label{Rxxixi-Sl}
R(x,\xi)\xi =	-\eta(x)\xi+\dfrac{1}{2n} Qx
-\dfrac{1}{4n^2}\bigl\{[\tau-2n(2n-1)]\f^2 x - [\tilde\tau+2n]\f x \bigr\}.
\end{equation*}

After that, bearing in mind \eqref{defSl} and \eqref{curSl}, we compute the covariant derivative of $R(x,\xi)\xi$ with respect to $\n_z$ and we take its trace for $z=e_i$ and $x=e_j$ which gives
\begin{equation}\label{trnRxixi-Sl}
\begin{array}{l}
g^{ij}g\bigl(\left(\n_{e_i} R\right)(e_j,\xi)\xi,y\bigr) = -\dfrac{1}{4n}\D\tau(y)
-\left\{\dfrac{\tilde\tau}{2n}+1\right\}\eta(y).
\end{array}
\end{equation}

The following consequence of the second Bianchi identity is valid
\begin{equation}\label{trnRyxixi-Sl}
\begin{array}{l}
g^{ij} g\bigl((\n_{e_i} R)(y,\xi)\xi,e_j\bigr) =
\eta\bigl(\left(\n_y Q\right)\xi\bigr)-\eta\bigl(\left(\n_\xi Q\right)y\bigr).
\end{array}
\end{equation}
For a para-Sasaki-like manifolds, according to \eqref{curSl}, the equalities $Q\xi=-2n\,\xi$ and $\n_x \xi=\f x$ hold. Using them, it follows that $(\n_x Q)\xi=-Q\f x+2n\,\f x$. As a consequence of the latter equality we have that the trace in the left hand side of \eqref{trnRyxixi-Sl} vanishes.
Then, by virtue of \eqref{trnRxixi-Sl} and \eqref{trnRyxixi-Sl} we get
\begin{equation*}\label{Dtauy-Sl}
\D\tau(y) =
-2\{\tilde\tau+2n\}\eta(y),
\end{equation*}
which implies
\[
\D\tau(\xi)=0,\qquad \tilde\tau=-2n.
\]
The latter equalities together with \eqref{tauElSl2} and \eqref{abctau*-ElSl} complete the proof.
\end{proof}

\section{Para-Ricci-like solitons on para-Sasaki-like manifolds}\label{sect-3}
\subsection{Para-Ricci-like solitons with potential Reeb vector field on para-Sasaki-like manifolds}

In \cite{HM17}, it is introduced the notion of the \emph{para-Ricci-like soliton with potential $\xi$}, \ie a Riemannian $\Pi$-manifold $\M$ admits a para-Ricci-like soliton with potential vector field $\xi$ and constants $(\lm,\mu,\nu)$ if its Ricci tensor $\rho$ satisfies the following:
\begin{equation}\label{defRl}
\begin{array}{l}
\rho=-\frac12 \mathcal{L}_{\xi} g - \lm\, g - \mu\, \g - \nu\, \eta\otimes \eta,
\end{array}
\end{equation}
where $\mathcal{L}$ stands for the Lie derivative.
If $\mu=0$ or $\mu=\nu=0$, then \eqref{defRl} defines an \emph{$\eta$-Ricci soliton} or a \emph{Ricci soliton} on $\M$, respectively.
If $\lm$, $\mu$, $\nu$ are functions on $\mathcal{M}$, then the soliton is called \emph{almost para-Ricci-like soliton}, \emph{almost $\eta$-Ricci soliton} or \emph{almost Ricci soliton}.

In \cite{HM17}, it is proved the truthfulness of the following
\begin{theorem}\cite{HM17}\label{thm:RlSl}
Let $\M$ be a $(2n+1)$-dimensional para-Sasaki-like {Riemannian $\Pi$-manifold. } Let $a$, $b$, $c$, $\lm$, $\mu$, $\nu$ be constants {satisfying the following conditions:}
\begin{equation}\label{SlElRl-const}
a+\lm=0,\qquad b+\mu+1=0,\qquad c+\nu-1=0.
\end{equation}
Then, $\M$ admits
a para-Ricci-like soliton with potential $\xi$ and constants $(\lm,\A\mu,\A\nu)$, where $\lm+\mu+\nu=2n$,
if and only if
it is para-Einstein-like with constants $(a,b,c)$, where $a+b+c=-2n$.

In particular, we obtain the following:
\begin{enumerate}
	\item[(i)] $\M$ admits an $\eta$-Ricci soliton with potential $\xi$ and constants $(\lm,0,2n-\lm)$ if and only if $\M$ is para-Einstein-like with constants $(-\lm,-1,\lm-2n+1)$.

	\item[(ii)] $\M$ admits a shrinking Ricci soliton with potential $\xi$ and constants $(2n,0,0)$ if and only if $\M$ is para-Einstein-like with constants $(-2n,-1,1)$.

	\item[(iii)] $\M$ is $\eta$-Einstein with constants $(a,0,-2n-a)$  if and only if
$\M$ admits a para-Ricci-like soliton with potential $\xi$ and constants $(-a,-1,a+2n+1)$.

	\item[(iv)] $\M$ is Einstein with constants $(2n,0,0)$ if and only if
$\M$ admits a para-Ricci-like soliton with potential $\xi$ and constants $(2n,-1,1)$.
\end{enumerate}
\end{theorem}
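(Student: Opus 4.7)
The proof is essentially a direct comparison of the two defining equations \eqref{defRl} and \eqref{defEl}, once we compute $\mathcal{L}_\xi g$ explicitly on a para-Sasaki-like Riemannian $\Pi$-manifold. My plan is as follows.

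First, I would compute the Lie derivative of $g$ along $\xi$. Since $\nabla$ is the Levi-Civita connection,
\[
(\LL_\xi g)(x,y) = g(\n_x \xi, y) + g(x, \n_y \xi),
\]
and the identity $\n_x \xi = \f x$ from \eqref{curSl} together with the symmetry $g(\f x, y) = g(x, \f y)$ from \eqref{strM2} gives $(\LL_\xi g)(x,y) = 2 g(\f x, y)$. Next, using the definition \eqref{gg} of the associated metric, I would rewrite $g(\f x, y) = g(x,\f y) = \g(x,y) - \eta(x)\eta(y)$, so that
\[
-\tfrac{1}{2}\LL_\xi g = -\g + \eta \otimes \eta.
\]

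Substituting this into the soliton equation \eqref{defRl} yields
\[
\rho = -\lm\, g - (\mu+1)\, \g - (\nu-1)\, \eta\otimes\eta.
\]
Comparing this with the para-Einstein-like form \eqref{defEl}, I would then read off that $\M$ admits the soliton with constants $(\lm,\mu,\nu)$ if and only if $\M$ is para-Einstein-like with constants $(a,b,c)$ satisfying exactly \eqref{SlElRl-const}. The arithmetic identity $a+b+c=-2n \Leftrightarrow \lm+\mu+\nu=2n$ then follows by adding the three relations in \eqref{SlElRl-const}; and the fact that $a+b+c=-2n$ holds automatically in the para-Sasaki-like case is already recorded in \eqref{tauElSl2}, so the consistency is built in. This takes care of the main equivalence.

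Finally, the four particular cases (i)--(iv) follow directly by specializing the correspondence. For (i), I would set $\mu = 0$ and solve \eqref{SlElRl-const} to get $(a,b,c) = (-\lm,-1,\lm-2n+1)$ and $\nu = 2n - \lm$ from $\lm+\mu+\nu=2n$. For (ii), further specialize $\mu = \nu = 0$, which forces $\lm = 2n$ (a shrinking Ricci soliton) and $(a,b,c) = (-2n,-1,1)$. Cases (iii) and (iv) are just the reverse reading: starting from an $\eta$-Einstein or Einstein manifold (that is, $b = 0$ or $b = c = 0$) and solving \eqref{SlElRl-const} for $(\lm,\mu,\nu)$. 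I do not anticipate a genuine obstacle here; the only thing to be careful about is the sign conventions and the fact that the coefficient of $\eta\otimes\eta$ produced by $\LL_\xi g$ is $+1$, not $-1$, which is precisely what makes the third relation in \eqref{SlElRl-const} read $c + \nu - 1 = 0$ instead of $c + \nu + 1 = 0$.
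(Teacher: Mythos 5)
Your proposal is correct and takes essentially the same route as the paper: the theorem is quoted from \cite{HM17}, but the paper reproduces exactly your key computation in \eqref{l1}, \eqref{l2} and \eqref{SlRl-rho}, from which the correspondence \eqref{SlElRl-const} and the special cases (i)--(iv) follow by the same direct comparison with \eqref{defEl}. (One small remark: carried out consistently, your method gives Einstein constants $(-2n,0,0)$ in case (iv), in agreement with $a+b+c=-2n$ and with $\rho=-2n\,g$ used later in the paper, so the $(2n,0,0)$ in the stated theorem is a sign typo rather than a flaw in your argument.)
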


Now, we study the covariant derivative of the Ricci tensor with respect to the metric $g$ of a $(2n+1)$-dimensional para-Sasaki-like Riemannian $\Pi$-manifold $\M$ with a para-Ricci-like soliton of the considered type.

For a para-Sasaki-like $\M$ we have
\begin{equation}\label{l1}
\left(\mathcal{L}_{\xi} g\right)(x,y)=g(\n_x\xi,y)+g(x,\n_y\xi)=2g(x,\f y).
\end{equation}
Then, bearing in mind the definition equality of $\g$, it follows that
\begin{equation}\label{l2}
\frac12 \mathcal{L}_{\xi} g=\g-\eta\otimes \eta.
\end{equation}
Because of \eqref{defRl}, $\rho$ takes the form
\begin{equation}\label{SlRl-rho}
	\rho = -\lm g - (\mu+1) \g  - (\nu-1) \eta\otimes \eta.
\end{equation}

\begin{corollary}
Let $\M$ satisfy the conditions in the general case of \thmref{thm:RlSl}.
Then, the constants $a$, $b$, $c$, $\lm$, $\mu$, $\nu$ are expressed by $\tau$ and $\tilde\tau$ as follows
\[
\begin{array}{lll}
\lm=-1-\frac{1}{2n}\tau,\qquad &\mu=-2-\frac{1}{2n}\tilde\tau, \qquad &\nu=\frac{1}{2n}(\tau+\tilde\tau)+2n+3,
\\[4pt]
a=\frac{1}{2n}\tau+1,\qquad &b=\frac{1}{2n}\tilde\tau+1, \qquad &c=-2n-2 -\frac{1}{2n}(\tau+\tilde\tau).
\end{array}
\]
\end{corollary}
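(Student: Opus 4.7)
The plan is almost purely algebraic: the corollary is asking to invert the three linear relations of \thmref{thm:RlSl} together with the traced identities from \eqref{tauElSl2} and \eqref{abctau*-ElSl}, treating $\tau$ and $\tilde\tau$ as the free parameters.

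First I would observe that by \thmref{thm:RlSl} the hypothesis gives a para-Einstein-like structure with constants $(a,b,c)$ satisfying $a+b+c=-2n$, tied to the soliton constants by \eqref{SlElRl-const}, namely $\lm=-a$, $\mu=-b-1$, $\nu=1-c$. Thus it is enough to express $a,b,c$ in terms of $\tau$ and $\tilde\tau$ and then feed the results through these three identities.

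Next I would invert the scalar-curvature formulas already available for the para-Einstein-like case. The relation $\tau=2n(a-1)$ from \eqref{tauElSl2} immediately gives $a=\tfrac{\tau}{2n}+1$, while \eqref{abctau*-ElSl} yields $b=\tfrac{\tilde\tau}{2n}+1$. Plugging both into the constraint $a+b+c=-2n$ produces $c=-2n-2-\tfrac{\tau+\tilde\tau}{2n}$. Substituting these three formulas into $\lm=-a$, $\mu=-b-1$, $\nu=1-c$ then delivers the stated expressions for $\lm$, $\mu$, $\nu$.

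There is no real obstacle here; the only point worth double-checking is internal consistency. On the one hand, \propref{prop:El-Dtau} forces $\tilde\tau=-2n$ (so $\mu=-1$ and $b=0$), so the corollary's parametrization in terms of both $\tau$ and $\tilde\tau$ is in fact a one-parameter family in disguise; on the other hand, one should verify that the derived values satisfy the soliton trace identity $\lm+\mu+\nu=2n$ from \thmref{thm:RlSl}, which follows by direct summation of the three expressions and confirms that nothing has been lost. With these checks the proof reduces to a short display of the substitutions described above.
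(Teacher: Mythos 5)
Your proposal is correct and follows essentially the same route as the paper, whose proof is simply ``by direct computations from \eqref{SlRl-rho}'': inverting $\lm=-a$, $\mu=-b-1$, $\nu=1-c$ from \eqref{SlElRl-const} together with the trace identities \eqref{tauElSl2} and \eqref{abctau*-ElSl} is exactly that computation. Your consistency checks (that $\lm+\mu+\nu=2n$ holds and that \propref{prop:El-Dtau} in fact pins down $\tilde\tau=-2n$) are sound additional observations, not deviations.
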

\begin{proof}
By direct computations from \eqref{SlRl-rho} we complete the proof.
\end{proof}

We apply covariant derivatives to \eqref{SlRl-rho}, using \eqref{gg}, \eqref{defSl} and \eqref{SlElRl-const}, and we get
\begin{equation}\label{SlRl-nrho}
\begin{array}{l}
\left(\n_x \rho\right)(y,z)=(\mu+1) \{ g(\f x, \f y)\eta(z)+g(\f x, \f z)\eta(y)\}\\[4pt]
\phantom{\left(\n_x \rho\right)(y,z)}
-(\mu+\nu)\{ g(x, \f y)\eta(z)+g(x, \f z)\eta(y)\}.
\end{array}
\end{equation}

The Ricci tensor is called $\n$-recurrent if its covariant derivative with respect to $\n$, is expressed only by $\rho$ and some 1-form.

\begin{theorem}\label{thm-1}
Let $\M$ be a $(2n+1)$-dimensional para-Sasaki-like Riemannian $\Pi$-manifold admitting
a para-Ricci-like soliton with potential $\xi$ and constants $(\lm,\mu,\nu)$. Then:
\begin{enumerate}
\item[(i)] Every para-Einstein-like $\M$ is Ricci $\eta$-parallel, \ie $(\n\rho)|_{\ker\eta}=0$.
\item[(ii)] Every para-Einstein-like $\M$ is Ricci parallel along $\xi$ \ie $\n_{\xi}\rho=0$.
\item[(iii)] The manifold $\M$ is locally Ricci symmetric if and only if $(\lm,\mu,\nu)=(2n,-1,1)$, i.e. it is an Einstein manifold.
	\item[(iv)] The Ricci tensor $\rho$ of $\M$ is $\n$-recurrent and satisfies the following formula
\begin{equation}\label{SlRl-nrho2}
\begin{array}{l}
\left(\n_x \rho\right)(y,z)=\dfrac{\lm(\lm-2n)-(\mu+1)^2}{(\mu+1)^2-\lm^2}\{\rho( x, \f y)\eta(z)+ \rho( x, \f z)\eta(y)\}\\[4pt]
\phantom{\left(\n_x \rho\right)(y,z)=}
-\dfrac{2n(\mu+1)}{(\mu+1)^2-\lm^2}\{ \rho(\f x, \f y)\eta(z)+ \rho(\f x, \f z)\eta(y)\},
\end{array}
\end{equation}
where $(\lm,\mu)\neq(0,-1)$.
\end{enumerate}
\end{theorem}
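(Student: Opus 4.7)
The plan is to derive all four parts from the explicit formula \eqref{SlRl-nrho} for $\n\rho$, itself an immediate consequence of the Ricci tensor expression \eqref{SlRl-rho}, the para-Sasaki-like identity \eqref{defSl}, and the formulas in \eqref{curSl}. Parts (i) and (ii) will be mere specialisations: every term on the right-hand side of \eqref{SlRl-nrho} carries either $\eta(y)$ or $\eta(z)$, so choosing $y,z\in\ker\eta$ forces vanishing and gives (i); setting $x=\xi$ kills both $g(\f\xi,\f y)=0$ (since $\f\xi=0$) and $g(\xi,\f y)=\eta(\f y)=0$ (from \eqref{strM2} and $\eta\circ\f=0$), and gives (ii).

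For (iii), the sufficiency will come from direct substitution: the triple $(\lm,\mu,\nu)=(2n,-1,1)$ zeroes both coefficients $(\mu+1)$ and $(\mu+\nu)$ in \eqref{SlRl-nrho}, and at the same time reduces \eqref{SlRl-rho} to $\rho=-2n\,g$, confirming the Einstein property. For the necessity, starting from the assumption $\n\rho=0$, I will specialise \eqref{SlRl-nrho} to $z=\xi$, $y\in\ker\eta$ to extract
\[
(\mu+1)\,g(\f x,\f y)=(\mu+\nu)\,g(x,\f y)\qquad\text{for all }x\in\X(\mathcal{M}),\ y\in\ker\eta,
\]
which, after rewriting $g(\f x,\f y)=g(x,y)$ on $\ker\eta$, is the operator identity $(\mu+1)\,\mathrm{id}=(\mu+\nu)\,\f$ on $\ker\eta$. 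Since $\f|_{\ker\eta}$ has both $+1$ and $-1$ eigenspaces of dimension $n$, this forces $\mu+1=0$ and $\mu+\nu=0$, and combined with the soliton relation $\lm+\mu+\nu=2n$ from \thmref{thm:RlSl} it yields the triple claimed.

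The heart of the theorem is part (iv). My plan is to invert \eqref{SlRl-rho} so as to express the two scalars $g(\f x,\f y)$ and $g(x,\f y)$ appearing in \eqref{SlRl-nrho} in terms of $\rho$-values. Replacing $x\to\f x$ in \eqref{SlRl-rho}, and separately $x,y\to\f x,\f y$, and simplifying with $\eta\circ\f=0$ and $\f^2=I-\eta\otimes\xi$, should yield the $2\times 2$ linear system
\[
\rho(\f x,y)=-\lm\,g(x,\f y)-(\mu+1)\,g(\f x,\f y),
\]
\[
\rho(\f x,\f y)=-(\mu+1)\,g(x,\f y)-\lm\,g(\f x,\f y)
\]
in the two unknowns $g(x,\f y)$ and $g(\f x,\f y)$; its determinant $(\mu+1)^2-\lm^2$ is non-zero under the hypothesis $(\lm,\mu)\neq(0,-1)$, which is precisely where the inversion degenerates. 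Once the system is solved, the expressions obtained will be substituted back into \eqref{SlRl-nrho}, and the resulting cross-products of coefficients should collapse, via the soliton constraint $\mu+\nu=2n-\lm$ and the symmetry $\rho(\f x,y)=\rho(x,\f y)$ (a direct consequence of \eqref{SlRl-rho}), to the compact form announced in \eqref{SlRl-nrho2}.

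The main obstacle I anticipate is the coefficient bookkeeping in this last step: after inversion one is left with four rational expressions in $\lm$, $\mu+1$ and $\mu+\nu$, and only a careful application of $\mu+\nu=2n-\lm$ (not purely formal algebra) makes the $\rho(x,\f\cdot)$-contributions combine into the numerator $\lm(\lm-2n)-(\mu+1)^2$ and the $\rho(\f x,\f\cdot)$-contributions into a factor proportional to $2n(\mu+1)$. Parts (i)--(iii) amount to little more than clean specialisation of \eqref{SlRl-nrho}.
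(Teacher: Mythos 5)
Your proposal follows essentially the same route as the paper for all four parts: (i) and (ii) by specialising the explicit formula \eqref{SlRl-nrho}, (iii) by reading off that local Ricci symmetry forces $\mu+1=\mu+\nu=0$ together with $\lm+\mu+\nu=2n$, and (iv) by composing \eqref{SlRl-rho} with $\f$ to get the same $2\times 2$ linear system in $g(x,\f y)$ and $g(\f x,\f y)$, inverting it with determinant $\lm^2-(\mu+1)^2$, and substituting back into \eqref{SlRl-nrho}. One caveat you share with the paper: the inversion actually requires $(\mu+1)^2\neq\lm^2$, i.e. $\mu+1\neq\pm\lm$, which is strictly stronger than the stated condition $(\lm,\mu)\neq(0,-1)$, so the nondegeneracy claim as phrased is not quite accurate in either argument.
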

\begin{proof}
The tensors
\(
\left(\n_x \rho\right)(\f y,\f z)\),  
\(\left(\n_{\xi} \rho\right)(y,z)\) and 
\(\left(\n_{x} \rho\right)(\xi,\xi)\) vanish and therefore we finish the proof of \emph{(i)} and \emph{(ii)}.

Bearing in mind \eqref{SlRl-nrho}, the manifold is locally Ricci symmetric, i.e. $\left(\n_x \rho\right)(y,z)=0$, if and only if $1+\mu=\mu+\nu=0$, which is equivalent to $\mu=-\nu=-1$. The value of $\lm=2n$ comes from the condition $\lm+\mu+\nu=2n$ since the manifold is para-Sasaki-like. 
It follows from \thmref{thm:RlSl} (iv) that the manifold is Einstein. So, we prove the assertion \emph{(iii)}.

By virtue of \eqref{strM}, \eqref{strM2}, \eqref{gg} and $\lm+\mu+\nu=2n$, \eqref{SlRl-rho}
can be rewritten as
\[
	\rho(x,y) = -\lm\, g(\f x,\f y) - (\mu+1) g(x,\f y)  -2n\, \eta(x)\eta(y)
\]
and therefore the following two equalities are valid
\[
\begin{array}{l}
	\rho(x,\f y) = -\lm\, g(x,\f y) - (\mu+1) g(\f x,\f y),	
\\[4pt]
	\rho(\f x,\f y) = -\lm\, g(\f x,\f y) - (\mu+1) g(x,\f y).	
\end{array}
\]
The latter two equations for $(\lm,\mu)\neq(0,-1)$ can be solved as a system with respect to $g(\f x,\f y)$ and $g(x,\f y)$ as follows
\[
\begin{array}{l}
	g(x,\f y) = \dfrac{1}{(\mu+1)^2-\lm^2}\{\lm\,\rho(x,\f y) - (\mu+1)\,\rho(\f x,\f y)\},	
\\[4pt]
	g(\f x,\f y) = \dfrac{1}{(\mu+1)^2-\lm^2}\{\lm\rho(\f x,\f y) - (\mu+1) \rho(x,\f y)\}.	
\end{array}
\]
The recurrent dependence \eqref{SlRl-nrho2} of the Ricci tensor is get by substituting the latter equalities into \eqref{SlRl-nrho}. Thus, we complete the proof of \emph{(iv)}.
\end{proof}

\begin{remark}
A para-Sasaki-like Riemannian $\Pi$-manifold $\M$ admitting
a para-Ricci-like soliton with potential $\xi$ and constants $(\lm,\mu,\nu)$ is locally Ricci symmetric just in the case (iv) of \thmref{thm:RlSl}.
\end{remark}

\subsection{Para-Ricci-like solitons with a potential pointwise collinear with the Reeb vector field on para-Sasaki-like manifolds}
Similarly to the definition of a para-Ricci-like soliton with potential $\xi$, given in \eqref{defRl},
we introduce the following notion.
\begin{definition}
{A Riemannian $\Pi$-manifold } $\M$ admits a para-Ricci-like soliton with potential vector field $v$ and constants $(\lm,\mu,\nu)$ if its Ricci tensor $\rho$ satisfies the following:
\begin{equation}\label{defRl-v}
\begin{array}{l}
\rho=-\frac12 \mathcal{L}_{v} g - \lm\, g - \mu\, \g - \nu\, \eta\otimes \eta.
\end{array}
\end{equation}
\end{definition}

Let $\M$ be a para-Sasaki-like Riemannian $\Pi$-manifold admitting a para-Ricci-like soliton whose potential vector field $v$ is pointwise collinear with $\xi$, i.e. $v=k\,\xi$, where $k$ is a differentiable function on $\mathcal{M}$. The vector field $v$ belongs to the vertical distribution $H^\bot=\Span\xi$, which is orthogonal to the contact distribution $H=\ker\eta$ with respect to $g$.

\begin{theorem}\label{thm:k=const}
Let $\M$ be a para-Sasaki-like Riemannian $\Pi$-manifold of dimension $2n+1$ and let it admits a para-Ricci-like soliton with constants $(\lm,\mu,\nu)$ whose potential vector field $v$ satisfies the condition $v=k\,\xi$, i.e. it is pointwise collinear with the Reeb vector field $\xi$, where $k$ is a differentiable function on $\mathcal{M}$. Then:
\begin{enumerate}
	\item[(i)] $k=-\mu$, i.e. $k$ is constant;

	\item[(ii)] $\lm+\nu=k+2n$ is valid;

	\item[(iii)] $\M$ is $\eta$-Einstein with constants $(a,b,c)=(-\lm,0,\lm-2n)$.
\end{enumerate}
\end{theorem}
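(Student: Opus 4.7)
The plan is to compute $\mathcal{L}_v g$ explicitly, exploit $\rho(\cdot,\xi) = -2n\,\eta$ from \eqref{curSl}, and then invoke the contracted second Bianchi identity. First, from $\n_x\xi = \f x$ one immediately obtains
\[
(\mathcal{L}_v g)(x,y) = x(k)\eta(y) + y(k)\eta(x) + 2k\,g(x,\f y).
\]
Substituting this into \eqref{defRl-v} and using \eqref{gg} to replace $g(x,\f y)$ by $\g(x,y) - \eta(x)\eta(y)$, the soliton equation becomes
\[
\rho(x,y) = -\tfrac12\{x(k)\eta(y)+y(k)\eta(x)\} - \lm\,g(x,y) - (k+\mu)\g(x,y) - (\nu-k)\eta(x)\eta(y).
\]

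Next, I would set $y=\xi$ and compare with $\rho(x,\xi) = -2n\,\eta(x)$ from \eqref{curSl}. This yields a linear equation for $x(k)$; evaluating at $x=\xi$ pins down $\xi(k) = 2n-\lm-\mu-\nu$, and back-substitution gives $\D k = c\,\eta$ with $c = 2n-\lm-\mu-\nu$ a constant. Reinserting $x(k) = c\,\eta(x)$ into the formula for $\rho$ annihilates the first brace and produces the clean form
\[
\rho = -\lm\,g - (k+\mu)\,\g - (2n-\lm-\mu-k)\,\eta\otimes\eta.
\]

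The decisive step is to prove $k+\mu=0$; for this I would use the contracted second Bianchi identity $\Div\rho = \tfrac12\D\tau$. Tracing the last display with $g$ and using $\tr\f=0$ (whence $\tr_g\g = 1$) shows $\tau = -2n(\lm+1)$ is constant, so $\D\tau=0$. On the other hand, differentiating the same display using $(\n_z\f)y$ from \eqref{defSl} and $(\n_z\eta)(x) = g(z,\f x)$ from \eqref{curSl}, and then $g$-tracing over $z$ and the first slot (invoking the routine identities $\tr\f=0$, $\eta\circ\f=0$, $g^{ij}\eta(e_i)\eta(e_j)=1$, $g^{ij}g_{ij}=2n+1$), I expect the divergence to collapse to $(\Div\rho)(y) = 2n(k+\mu)\,\eta(y)$. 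Combined with $\D\tau=0$ this forces $k+\mu=0$, which is \emph{(i)}. Since $\mu$ is constant, so is $k$; then $\D k = 0$ yields $c=0$, \ie $\lm+\mu+\nu=2n$, and substituting $\mu=-k$ rewrites this as $\lm+\nu = k+2n$, which is \emph{(ii)}. Finally, imposing $k+\mu=0$ in the clean form of $\rho$ collapses the $\g$-term and leaves $\rho = -\lm\,g + (\lm-2n)\,\eta\otimes\eta$, establishing \emph{(iii)}.

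The main technical obstacle I foresee is the divergence computation: it requires an explicit formula for $(\n_z\g)$, obtained from $(\n_z\f)y$ in \eqref{defSl} through the definition \eqref{gg} of $\g$, and then the $g$-trace must be carried out carefully, as several $\f$-terms and $\eta$-terms arise and have to be reduced using the structure identities. Apart from this mechanical bookkeeping, the proof uses only the results already collected in \sectref{sect-2}.
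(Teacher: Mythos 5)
Your proposal is correct, and it follows the paper's own argument verbatim up to the derivation of $\D k=(2n-\lm-\mu-\nu)\,\eta$ and the resulting expression $\rho=-\lm g-(k+\mu)\g-(2n-\lm-\mu-k)\,\eta\otimes\eta$; the divergence is only in how the decisive identity $k+\mu=0$ is obtained. The paper simply reads off that $\M$ is almost para-Einstein-like with functions $(a,b,c)=(-\lm,-k-\mu,\lm+\mu-2n+k)$ and invokes \propref{prop:El-Dtau}, which forces $b=0$ and hence $k=-\mu$; that proposition was proved in \sectref{sect-2} by tracing $\n R(\cdot,\xi)\xi$ and using a consequence of the second Bianchi identity. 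You instead apply the contracted Bianchi identity $\Div\rho=\tfrac12\D\tau$ directly to your expression for $\rho$: the $g$-trace gives $\tau=-2n(\lm+1)$ (the $k$-terms cancel, so $\D\tau=0$ even while $k$ is still only known to satisfy $\D k=c\,\eta$), and I checked that your divergence computation does collapse as claimed — the $\D k$-contributions from the $\g$-term and the $\eta\otimes\eta$-term cancel each other, the $\n\eta$-terms vanish by $\tr\f=0$ and $\eta\circ\f=0$, and the surviving piece is $-(k+\mu)\,g^{ij}(\n_{e_i}\g)(e_j,y)=2n(k+\mu)\eta(y)$ coming from $\tr\f^2=2n$. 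So your route is a self-contained, more elementary repackaging of the same underlying Bianchi-identity input: it avoids relying on \propref{prop:El-Dtau} but pays for it with the $\n\g$ bookkeeping you correctly flag as the main technical burden, whereas the paper's route is shorter given that the proposition is already in hand (and is more general, since it applies to any almost para-Einstein-like para-Sasaki-like manifold). A minor remark: your final form $\rho=-\lm g+(\lm-2n)\eta\otimes\eta$ agrees with the theorem's statement $(a,b,c)=(-\lm,0,\lm-2n)$, while the paper's displayed equation \eqref{SlRl-v-rho-k=const} carries a sign typo $(\lm+2n)$; your version is the correct one.
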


\begin{proof}
Taking into account the first equality in \eqref{curSl}, in the considered case we have
\[
\begin{array}{l}
\left(\LL_v g\right)(x,y)=g(\n_x v,y)+g(x,\n_y v)=g(\n_x k\xi,y)+g(x,\n_y k\xi)\\[4pt]
\phantom{\left(\LL_v g\right)(x,y)}
=\D{k}(x)\eta(y)+\D{k}(y)\eta(x)+2kg(x,\f y).
\end{array}
\]
Substituting it in \eqref{defRl-v}, we obtain
\begin{equation}\label{SlRl-v}
\begin{array}{l}
\D{k}(x)\eta(y)+\D{k}(y)\eta(x)=-2\{\rho(x,y)+\lm g(x,y)+(k+\mu)g(x,\f y) \\[4pt]
\phantom{\D{k}(x)\eta(y)+\D{k}(y)\eta(x)=-2\{\rho(x,y)}
+(\mu+\nu)\eta(x)\eta(y)\}.
\end{array}
\end{equation}
Using the expression of $\rho(x,\xi)$  from \eqref{curSl} and replacing $y$ with $\xi$, the latter equality implies
\begin{equation}\label{SlRl-v-xk}
\begin{array}{l}
\D{k}(x)= -\{\D{k}(\xi)+2(\lm+\mu+\nu-2n)\}\eta(x).
\end{array}
\end{equation}
Now, substituting $x$ for $\xi$, we get
\begin{equation*}\label{SlRl-v-xik}
\begin{array}{l}
\D{k}(\xi)= -(\lm+\mu+\nu-2n).
\end{array}
\end{equation*}
Therefore, \eqref{SlRl-v-xk} takes the form
\begin{equation}\label{SlRl-v-xk2}
\begin{array}{l}
\D{k}(x)= -(\lm+\mu+\nu-2n)\eta(x).
\end{array}
\end{equation}

Taking into account \eqref{SlRl-v} and \eqref{SlRl-v-xk2}, we obtain the following for the Ricci tensor
\begin{equation}\label{SlRl-v-rho}
\rho=-\lm g -(k+\mu) \g +(\lm+\mu-2n+k)\eta\otimes\eta.
\end{equation}
Therefore, $\M$ is almost para-Einstein-like with functions
\begin{equation}\label{abc1}
(a,b,c)=(-\lm,-k-\mu,\lm+\mu-2n+k).
\end{equation}
%
Then, $\M$ is $\eta$-Einstein with constants
\begin{equation}\label{abc2}
(a,b,c)=\left(\frac{\tau}{2n}+1,\,0,\,-2n-1-\frac{\tau}{2n}\right),
\end{equation}
according to \propref{prop:El-Dtau}.
Comparing \eqref{abc1} and \eqref{abc2}, we deduce that $k=-\mu$, i.e. $k$ is a constant.

Thus, according to \eqref{SlRl-v-xk2}, we infer that the condition $\lm+\mu+\nu=2n$ is satisfied.
Then, \eqref{SlRl-v-rho} takes the following form
\begin{equation}\label{SlRl-v-rho-k=const}
\rho=-\lm g +(\lm+2n)\eta\otimes\eta,
\end{equation}
which completes the proof.
\end{proof}

\subsection{Some additional curvature properties}

Here, we continue to consider a manifold $\M$, $\dim{M}=2n+1$, which is a para-Sasaki-like Riemannian $\Pi$-manifold admitting a para-Ricci-like soliton with vertical potential $v$, i.e. $v=k\,\xi$ for $k=const$. Then, \thmref{thm:k=const} is valid.

Now, we investigate some well-known curvature properties.

A manifold $\M$ is called \emph{locally Ricci symmetry} if $\n\rho$ vanishes. A manifold $M$ is called \emph{Ricci semi-symmetric} if the following equation is valid
\begin{equation}\label{Rrho}
\rho\left(R(x,y)z,w\right)+\rho\left(z,R(x,y)w\right)=0.
\end{equation}

In \cite{Gray78}, the notions of a \emph{cyclic parallel tensor} or a \emph{tensor of Codazzi type} are given, namely the non-vanishing Ricci tensor $\rho$ satisfying the condition
$
(\n_x \rho)(y,z) + (\n_y \rho)(z,x) + (\n_z \rho)(x,y) =0
$
or
$
(\n_x \rho)(y,z) = (\n_y \rho)(x,z)
$, respectively.

In \cite{DeSa08}, it is defined a \emph{Ricci $\f$-symmetric} Ricci operator $Q$, \ie the non-vanishing $Q$ satisfies $\f^2(\n_x Q)y =0$.
Moreover, according to \cite{GhoDe17}, if the latter property is valid for an arbitrary vector field on the manifold or for an orthogonal vector field to $\xi$ the manifold is called \emph{globally Ricci $\f$-symmetric} or \emph{locally Ricci $\f$-symmetric}, respectively.

An \emph{almost pseudo Ricci symmetric manifold} is a manifold whose non-vanishing Ricci tensor has the following condition \cite{ChaKaw07}
\begin{equation}\label{apRs-def}
(\n_x \rho)(y,z) = \{\al(x) + \bt(x)\}\rho(y,z) + \al(y)\rho(x,z) + \al(z)\rho(x,z),
\end{equation}
where $\al$ and $\bt$ are non-vanishing 1-forms.

According to \cite{SinKha01}, a manifold is called \emph{special weakly Ricci symmetric} when its non-vanishing Ricci tensor satisfies the following
\begin{equation}\label{swRs-def}
(\n_x \rho)(y,z) = 2\al(x)\rho(y,z) + \al(y)\rho(x,z) + \al(z)\rho(x,z).
\end{equation}

\begin{theorem}\label{thm-3}
Let $\M$ be a $(2n+1)$-dimensional para-Sasaki-like Riemannian $\Pi$-manifold admitting
a para-Ricci-like soliton with vertical potential $v$ and constants $(\lm,\mu,\nu)$. Then:
\begin{enumerate}
\item[(i)] $\M$ is locally Ricci $\f$-symmetric.
\item[(ii)] each of the following properties of $\M$ is valid if and only if $\M$ is an Einstein manifold: \\
a) locally Ricci symmetric;\quad b) Ricci semi-symmetric;\quad c) globally Ricci $\f$-symmetric;\quad
d) almost pseudo Ricci symmetric;\quad e) special weakly Ricci symmetric;\quad
f) cyclic parallel Ricci tensor;\quad g) Ricci tensor of Codazzi type.
\end{enumerate}
\end{theorem}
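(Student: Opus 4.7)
The key input is \thmref{thm:k=const}, which under the hypotheses of \thmref{thm-3} yields the explicit $\eta$-Einstein form
\[
\rho = -\lm\,g + (\lm-2n)\,\eta\otimes\eta,
\]
so that $\M$ is Einstein precisely when $\lm = 2n$ (in which case $\rho = -2n\,g$). Using $\n g = 0$ together with $(\n_x \eta)(y)=g(x,\f y)$ from \eqref{curSl}, I would first derive the driving identities
\[
(\n_x \rho)(y,z) = (\lm-2n)\bigl\{g(x,\f y)\eta(z) + g(x,\f z)\eta(y)\bigr\},
\]
\[
(\n_x Q)y = (\lm-2n)\bigl\{g(x,\f y)\,\xi + \eta(y)\,\f x\bigr\},
\]
where $Q$ is the Ricci operator. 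Every item of the theorem will then follow from these two formulas.

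For (i), applying $\f^2$ and using $\f\xi=0$ together with $\f^3=\f$ gives $\f^2(\n_x Q)y = (\lm-2n)\,\eta(y)\,\f x$, which vanishes whenever $y\in\ker\eta$; hence local Ricci $\f$-symmetry holds without any restriction on $\lm$.

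For (ii) the easy direction (Einstein $\Rightarrow$ each property) follows from $\lm = 2n$, since then $\n\rho = 0$ and (a), (c), (f), (g) are immediate, (b) is a consequence of the skew-adjointness of $R(x,y)$ with respect to $g$, and (d), (e) hold with appropriately chosen $\alpha$, $\beta$. The substantive direction (property $\Rightarrow$ Einstein) is handled case by case by isolating the factor $(\lm-2n)$: (a) forces it at once; (c) arises by setting $y=\xi$ in the expression for $\f^2(\n_x Q)y$; for (f) and (g) a cyclic sum (resp.\ antisymmetrisation) with $z=\xi$ and $x,y\in\ker\eta$ reduces to $(\lm-2n)g(x,\f y)=0$; for (b) one expands $\rho(R(x,y)z,w)+\rho(z,R(x,y)w)$ using $g(R(x,y)z,w)+g(z,R(x,y)w)=0$ and the identity $\eta(R(x,y)z)=\eta(y)g(x,z)-\eta(x)g(y,z)$ obtained from $R(x,y)\xi=-\eta(y)x+\eta(x)y$, then tests with $x=w=\xi$ and $y,z\in\ker\eta$; finally, for (d) and (e) one substitutes the explicit $\rho$ and $\n\rho$ into \eqref{apRs-def} and \eqref{swRs-def} and, after first testing along $\xi$ to pin down the values of $\al, \bt$ on $\xi$, compares the two rank-structurally independent bilinear forms $g(\cdot,\cdot)|_{\ker\eta}$ and $g(\cdot,\f\cdot)|_{\ker\eta}$ to force $\lm-2n=0$.

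The main obstacle will be the bookkeeping in items (d) and (e), where the a priori unknown 1-forms $\al$ and $\bt$ must be disentangled from the structural coefficient $\lm-2n$. The plan is to evaluate the defining identities first at arguments collinear with $\xi$ (where $\rho$ collapses to $-2n\,\eta\otimes\eta$ and $\n\rho$ simplifies drastically) in order to constrain $\al(\xi)$ and $\bt(\xi)$, and then to exploit the linear independence of $g$ and $g(\cdot,\f\cdot)$ on the paracontact distribution $\ker\eta$ to conclude that the remaining coefficient $\lm-2n$ must vanish. All other items of (ii) require only direct substitution and are essentially one-line consequences of the two driving formulas above.
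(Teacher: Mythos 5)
Your proposal is correct and follows essentially the same route as the paper: both hinge on the single identity $(\n_x\rho)(y,z)=(\lm-2n)\{g(x,\f y)\eta(z)+g(x,\f z)\eta(y)\}$ coming from the $\eta$-Einstein form in \thmref{thm:k=const}, and then isolate the factor $\lm-2n$ case by case (with the $\xi$-evaluation trick for the 1-forms $\al,\bt$ in (d) and (e)). Your criterion ``Einstein $\Leftrightarrow\lm=2n$'' is the consistent one given the constants $(-\lm,0,\lm-2n)$ of \thmref{thm:k=const}(iii); the sign appearing in \eqref{SlRl-v-rho-k=const} and in the paper's proof of (ii-b) is a typo.
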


\begin{proof}
In a similar way as for \eqref{SlRl-nrho}, taking into account \eqref{SlRl-v-rho-k=const}, we get
\begin{equation}\label{SlRl-nrho-k}
\begin{array}{l}
\left(\n_x \rho\right)(y,z)=(\lm-2n)\{ g(x, \f y)\eta(z)+g(x, \f z)\eta(y)\}.
\end{array}
\end{equation}
Then, it is easy to conclude the statement \emph{(ii-a)}.

Bearing in mind \eqref{SlRl-v-rho-k=const}, it follows from \eqref{Rrho} that
\begin{equation}\label{rhorho}
(\lm-2n)\{R(x,y,z,\xi)\eta(w)+R(x,y,w,\xi)\eta(z)=0.
\end{equation}
Then \eqref{curSl} and \eqref{rhorho} imply
\begin{equation}\label{lgff}
\begin{array}{l}
(\lm-2n)\bigl\{\left[-\eta(x)g(y,z)+\eta(y)g(x,z)\right]\eta(w)\\[4pt]
\phantom{(\lm+2n)\bigl\{}
+\left[-\eta(x)g(y,w)+\eta(y)g(x,w)\right]\eta(z)\bigr\}=0.
\end{array}
\end{equation}
So, \eqref{lgff} for $w=\xi$ provides $\lm=-2n$. Therefore, by virtue of \eqref{Rrho}, $\M$ is Einstein.

The inverse implication is clear which completes the proof of \emph{(ii-b)}.
Similarly to it, we establish the truthfulness of \emph{(i)}, \emph{(ii-c)}, \emph{(ii-f)} and \emph{(ii-g)}.

Substituting \eqref{SlRl-nrho-k} in \eqref{apRs-def} we get
\begin{equation}\label{apRs}
\begin{array}{l}
-\{\al(x)+\bt(x)\}\{\lm g(\f y,\f z)+2n\,\eta(y)\eta(z)\} \\[4pt]
-\al(y)\{\lm g(\f x,\f z)+2n\,\eta(x)\eta(z)\}-\al(z)\{\lm g(\f x,\f y)+2n\,\eta(x)\eta(y)\}\\[4pt]
-(\lm-2n)\{ g(x, \f y)\eta(z)+g(x, \f z)\eta(y)\}=0
\end{array}
\end{equation}
and setting successively $x$, $y$ and $z$ as $\xi$, we obtain that
\begin{equation}\label{albt}
\al=\al(\xi)\eta,\qquad \bt=-3\al(\xi)\eta.
\end{equation}
Setting \eqref{albt} in \eqref{apRs} and substituting $z=\xi$, we get
\[
\lm\al(\xi)g(\f x,\f y)+(\lm-2n) g(x, \f y)=0,
\]
which is fulfilled if and only if $\lm=2n$ and $\al(\xi)=0$.

Vice versa, let $\M$ be Einstein, i.e. $\rho = -2n g$. Then, \eqref{apRs-def} is transformed in
\begin{equation}\label{albtggg}
\{\al(x)+\bt(x)\}g(y,z)+\al(y) g(x, z)+\al(z) g(x,y)=0.
\end{equation}
Substituting successively $x$, $y$ and $z$ for $\xi$,
we get \eqref{albt}, which combined with \eqref{albtggg} implies
\[
\al(\xi)\{-2\eta(x)g(y,z)+\eta(y) g(x, z)+\eta(z) g(x,y)\}=0
\]
for arbitrary $x$, $y$, $z$ and therefore $\al(\xi)=0$ holds.
Thus, we complete the proof of assertion \emph{(ii-d)}.

We come to the conclusion that an almost pseudo Ricci symmetric manifold with $\al=\bt$ is a special weakly Ricci symmetric manifold, comparing \eqref{swRs-def} with \eqref{apRs-def}. Then, from \eqref{albt} we obtain that $\al=0$ and therefore $\M$ has $\n\rho=0$. Taking into account \emph{(ii-d)}, we get the validity of the statement \emph{(ii-e)}.
\end{proof}

\section{Parallel symmetric second order covariant tensor on $\M$}\label{sect-4}

Let $h$ be a symmetric $(0,2)$-tensor field which is parallel with respect to the Levi-Civita connection of $g$, i.e. $\n h=0$. The Ricci identity for $h$ is valid, \ie
\[
\left(\n_x\n_y h\right)(z,w)-\left(\n_y\n_x h\right)(z,w)=-h\left(R(x,y)z,\,w\right)-h\left(z,\,R(x,y)w\right).
\]
The latter equality with $\n h=0$ implies $h\left(R(x,y)z,\,w\right)+h\left(z,\,R(x,y)w\right)=0$.
Therefore the following characteristic of $h$ is valid
\begin{equation}\label{hxi}
	h\left(R(x,y)\xi,\,\xi\right)=0.
\end{equation}

\begin{proposition}\label{prop:h-Sl}
Let $\M$ be a $(2n+1)$-dimensional para-Sasaki-like Riemannian $\Pi$-manifold. Every symmetric second-order covariant tensor which is parallel with respect to the Levi-Civita connection $\nabla$ of the metric $g$
is a constant multiple of this metric.
\end{proposition}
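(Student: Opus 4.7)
The plan is to bootstrap from the hint \eqref{hxi} using the para-Sasaki-like curvature identities in \eqref{curSl}. Substituting $R(x,y)\xi=-\eta(y)x+\eta(x)y$ into \eqref{hxi} and setting $y=\xi$ should yield the vertical identity
\[
	h(x,\xi)=f\,\eta(x),\qquad f:=h(\xi,\xi),
\]
so the behaviour of $h$ along $\xi$ is controlled by the single scalar $f$.

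Next, I would show that $f$ is constant on $\mathcal{M}$. Expanding $(\n_x h)(\xi,\xi)=0$ and using $\n_x\xi=\f x$ from \eqref{curSl}, the Leibniz rule gives
\[
	x(f)=2h(\f x,\xi)=2f\,\eta(\f x),
\]
which vanishes since $\eta\circ\f=0$ by \eqref{strM}. Hence $f\in\R$ is a real constant.

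The third step is to expand $(\n_x h)(y,\xi)=0$ using the expression $h(y,\xi)=f\,\eta(y)$ from above, together with $(\n_x\eta)(y)=g(x,\f y)$ and $\n_x\xi=\f x$ from \eqref{curSl}. A direct computation should collapse this to the $\f$-twisted identity
\[
	h(y,\f x)=f\,g(y,\f x),\qquad x,y\in\X(\mathcal{M}).
\]
Replacing $x$ by $\f x$ and invoking $\f^2=I-\eta\otimes\xi$ from \eqref{strM}, together with the already established relation $h(y,\xi)=f\,g(y,\xi)$, should then remove the $\f$ and yield $h=f\,g$ on all of $\mathcal{M}$.

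The two Leibniz-rule expansions in the second and third steps are routine. The main technical point I anticipate is the final untwisting: one must check that the $\eta\otimes\eta$ contribution arising from $\f^2=I-\eta\otimes\xi$ is precisely cancelled by the vertical identity from the first step, so that no spurious $\eta\otimes\eta$ term survives in the conclusion $h=f\,g$.
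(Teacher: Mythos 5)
Your proposal is correct and follows essentially the same route as the paper's proof: the vertical identity $h(x,\xi)=h(\xi,\xi)\eta(x)$ from \eqref{hxi}, the constancy of $h(\xi,\xi)$ via the Leibniz rule and $\n_x\xi=\f x$, the derived relation $h(x,\f y)=h(\xi,\xi)g(x,\f y)$, and the final untwisting by $\f^2=I-\eta\otimes\xi$ with the $\eta\otimes\eta$ terms cancelling exactly as you anticipate. No gaps.
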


\begin{proof}

Substituting $R(x,y)\xi$ from \eqref{curSl} in \eqref{hxi}, we get $h(x,\xi)\eta(y)-h(y,\xi)\eta(x)=0$. Then, for $y=\xi$ in the latter equality, we have
\begin{equation}\label{hxxi-Sl}
	h(x,\xi)=h(\xi,\xi)\eta(x).
\end{equation}
Bearing in mind the last equality in \eqref{strM2} and \eqref{hxxi-Sl}, we obtain
\begin{equation}\label{hnxxi-Sl}
	h(\n_x \xi,\xi)=0.
\end{equation}
We have
\[
x\bigl(h(\xi,\xi)\bigr)=2h(\n_x \xi,\xi),
\]
because of the expression of $\left(\n_x h\right)(y,z)$ in the case $\n h=0$ for $y=z=\xi$, which means that $h(\xi,\xi)=const$, according to \eqref{hnxxi-Sl}.

Taking the covariant derivative of \eqref{hxxi-Sl} with respect to $y$, we obtain the property
$h(x,\f y)=h(\xi,\xi)g(x,\f y)$, using the first equality of \eqref{curSl}.
Now, substiting $y$ for $\f y$ in the latter equality for $h$ and using \eqref{strM} and \eqref{hxxi-Sl}, we get
\begin{equation}\label{h=g}
h(x,y)=h(\xi,\xi)g(x,y),
\end{equation}
which means that $h$ is a constant multiple of $g$.
\end{proof}

Now, we apply \propref{prop:h-Sl} to a para-Ricci-like soliton.

\begin{theorem}\label{thm:h-Sl}
Let $\M$ be a $(2n+1)$-dimensional para-Sasaki-like Riemannian $\Pi$-manifold and let $h$ be determined as follows
\[
h=\frac12 \mathcal{L}_{\xi} g  + \rho + \mu\, \g  + \nu\, \eta\otimes \eta
\]
for $\mu,\nu\in\R$. The tensor $h$ is parallel with respect to $\n$ of $g$ if and only if $\M$ admits a para-Ricci-like soliton with potential $\xi$ and constants $(\lm,\mu,\nu)$, where
\[
\lm=-h(\xi,\xi)=2n-\mu-\nu.
\]
\end{theorem}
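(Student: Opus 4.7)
The plan is to use \propref{prop:h-Sl} as a black box: the tensor $h$ defined in the statement is symmetric (a sum of the symmetric tensors $\frac12\LL_\xi g$, $\rho$, $\g$, and $\eta\otimes\eta$), so if it is $\n$-parallel then \propref{prop:h-Sl} forces $h$ to be a constant multiple of $g$, specifically $h=h(\xi,\xi)g$. Rearranging the defining equation of $h$ yields
\[
\rho = -\tfrac12\LL_\xi g - h(\xi,\xi)\,g - \mu\,\g - \nu\,\eta\otimes\eta,
\]
which is precisely the para-Ricci-like soliton equation \eqref{defRl} with $\lm=-h(\xi,\xi)$.

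Next I would evaluate $h(\xi,\xi)$ using only the identities already in hand: from \eqref{l1} one gets $(\LL_\xi g)(\xi,\xi)=2g(\xi,\f\xi)=0$; from \eqref{curSl} one has $\rho(\xi,\xi)=-2n$; from \eqref{gg} one gets $\g(\xi,\xi)=1$; and $(\eta\otimes\eta)(\xi,\xi)=1$. Summing these contributions gives $h(\xi,\xi)=-2n+\mu+\nu$, whence $\lm=-h(\xi,\xi)=2n-\mu-\nu$, which is the stated value.

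For the converse direction, assume $\M$ admits a para-Ricci-like soliton with constants $(\lm,\mu,\nu)$ satisfying $\lm=2n-\mu-\nu$. Substituting the soliton relation \eqref{defRl} directly into the definition of $h$ causes the $\frac12\LL_\xi g$, $\mu\,\g$, and $\nu\,\eta\otimes\eta$ terms to cancel, leaving $h=-\lm\,g$. Since $-\lm$ is a constant and $\n g=0$, one concludes $\n h=0$.

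There is no real obstacle here; the nontrivial input is \propref{prop:h-Sl}, whose proof used the full strength of the para-Sasaki-like curvature identities \eqref{curSl}. The only bookkeeping task is making sure the numerical relation $\lm=-h(\xi,\xi)=2n-\mu-\nu$ is verified in both directions using the same collection of structural facts ($\rho(\xi,\xi)=-2n$, $\g(\xi,\xi)=1$, $(\LL_\xi g)(\xi,\xi)=0$), so that the two identities in the formula for $\lm$ are consistent.
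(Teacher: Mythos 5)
Your proposal is correct and follows essentially the same route as the paper: both directions rest on \propref{prop:h-Sl} forcing $h=h(\xi,\xi)\,g$, the evaluation $h(\xi,\xi)=-2n+\mu+\nu$ via $\rho(\xi,\xi)=-2n$, and the observation that $h=-\lm\,g$ is parallel for the converse. The only cosmetic difference is that the paper first rewrites $h$ as $\rho+(\mu+1)\g+(\nu-1)\eta\otimes\eta$ using \eqref{l2} before applying these facts, which changes nothing of substance.
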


\begin{proof}
By virtue of \eqref{l1} and \eqref{l2}, $h$ takes the form
\begin{equation}\label{h-Sl}
	h=\rho + (\mu+1) \g  + (\nu-1) \eta\otimes \eta.
\end{equation}

Firstly, let $h$ be parallel. Using \eqref{h-Sl}, \eqref{h=g}
and the last equality in \eqref{curSl}, we obtain that
\(
	h=(-2n + \mu + \nu)g.
\) 
Moreover, the latter equality and \eqref{defRl} deduce that it exists a para-Ricci-like soliton with constants $(\lm,\mu,\nu)$, where $\lm=-\mu-\nu-2n$.

Vice versa, the valid condition \eqref{defRl} can be rewritten as $h=-\lm g$.
Taking into account that $\lm$ is constant and $g$ is parallel, it follows that $h$ is also parallel with respect to $\n$ of $g$.
\end{proof}

\section{Example}\label{sect-5}

In \cite{IvMaMa2}, an explicit example of a $5$-dimensional para-Sasaki-like Riemannian $\Pi$-manifold is considered. It is constructed on a Lie group $G$ with a basis of left-invariant vector fields $\{e_0,\dots, e_{4}\}$ with corresponding Lie algebra determined as follows
\begin{equation}\label{comEx1}
\begin{array}{ll}
[e_0,e_1] = p e_2 - e_3 + q e_4,\quad &[e_0,e_2] = - p e_1 - q e_3 - e_4,\\[0pt]
[e_0,e_3] = - e_1  + q e_2 + p e_4,\quad &[e_0,e_4] = - q e_1 - e_2 - p e_3,
\end{array}
\end{equation}
where $p,q\in\R$.
The Lie group $G$ is equipped with an invariant Riemannian $\Pi$-structure $(\phi, \xi, \eta, g)$ {as follows:}
\begin{equation}\label{strEx1}
\begin{array}{l}
g(e_i,e_i)=1, \quad g(e_i,e_j)=0,\quad i,j\in\{0,1,\dots,4\},\; i\neq j, \\[0pt]
\xi=e_0, \quad \f  e_1=e_{3},\quad  \f e_2=e_{4},\quad \f  e_3=e_{1},\quad \f  e_4=e_{2}.
\end{array}
\end{equation}

In \cite{HM17}, it is proven that the considered para-Sasaki-like Riemannian $\Pi$-manifold $(G, \phi, \xi, \eta, g)$ is $\eta$-Einstein with constants
\begin{equation}\label{abcS}
(a,b,c)=(0,0,-4)
\end{equation}
and it admits a para-Ricci-like soliton with potential $\xi$ with constants
\begin{equation}\label{lmnS}
(\lm,\mu,\nu)=(0,-1,5).
\end{equation}

Now, we compute the components of $(\n_i \rho)_{jk}=(\n_{e_i} \rho)(e_j, e_k)$ of $\n\rho$, taking into account \eqref{comEx1}, \eqref{strEx1} and the only non-zero component $\rho_{00}=-4$ of $\rho$. The non-zero of them are determined by the following ones and their symmetry about $j$ and $k$
\begin{equation}\label{nrho-ex}
(\n_1 \rho)_{30}=(\n_2 \rho)_{40}=(\n_3 \rho)_{10}=(\n_4 \rho)_{20}=4.
\end{equation}

In conclusion, the constructed para-Sasaki-like Riemannian $\Pi$-manifold $(G, \phi, \xi, \eta, g)$ with the results in \eqref{abcS}, \eqref{lmnS} and \eqref{nrho-ex} support the proven assertions in \thmref{thm:k=const} for $k=1$, \propref{prop:h-Sl}, \thmref{thm:h-Sl} for $h=0$ and \thmref{thm-1} (i) and (ii).

\vspace{6pt}

\end{document}